\definecolor{darkblue}{RGB}{0, 0, 127}
\def\emph#1{\textcolor{darkblue}{\textbf{\boldmath #1}}}
\def\ssf#1{\textsf{\footnotesize #1}}
\renewenvironment{itemize}
  {\begin{list}{$\triangleright$}{%
   \setlength{\parskip}{0mm}
   \setlength{\topsep}{.3\baselineskip}
   \setlength{\rightmargin}{0mm}
   \setlength{\listparindent}{0mm}
   \setlength{\itemindent}{0mm}
   \setlength{\labelwidth}{3.5ex}
   \setlength{\itemsep}{.3\baselineskip}
   \setlength{\parsep}{0mm}
   \setlength{\partopsep}{0mm}
   \setlength{\labelsep}{1ex}
   \setlength{\leftmargin}{\labelwidth+\labelsep}
   }}{%
   \end{list}\vskip-\parskip}
\newtheoremstyle{mio}
     {2\parskip}
     {0mm}
     {\sl}
     {}
     {\bfseries}
     {}
     {3mm}
     {\llap{\thmnumber{#2}\hskip2mm}\thmname{#1}\thmnote{\bfseries{} #3}}
\newtheoremstyle{liscio}
     {2\parskip}
     {0mm}
     {}
     {}
     {\bfseries}
     {}
     {3mm}
     {\llap{\thmnumber{#2}\hskip2mm}\thmname{#1}\thmnote{\bfseries{}#3}}
\newcounter{thm}
\theoremstyle{mio}
\newtheorem{theorem}[thm]{Theorem}
\newtheorem{proposition}[thm]{Proposition}
\newtheorem{lemma}[thm]{Lemma}
\newtheorem{definition}[thm]{Definition}
\theoremstyle{liscio}
\def\QED{\noindent\nolinebreak[4]\hfill\rlap{\ \ $\Box$}\medskip}
\renewenvironment{proof}[1][Proof]%
{\begin{trivlist}\item[\hskip\labelsep\textbf{#1}]}
{\QED\end{trivlist}}
\def\E{\exists}
\def\Aut{\textrm{Aut}}
\def\Autf{{\rm Aut\kern.16ex{f}}}
\def\IMP{\Rightarrow}
\def\imp{\rightarrow}
\def\iff{\leftrightarrow}
\def\equivL{\stackrel{\smash{\scalebox{.5}{\rm L}}}{\equiv}}
\def\C{\EuScript C}
\def\D{\EuScript D}
\def\Ll{\EuScript L}
\def\U{\EuScript U}
\def\O{\EuScript O}
\def\0{\varnothing}
\def\phi{\varphi}
\def\epsilon{\varepsilon}
\title{On the diameter of Lascar strong types\\ (after Ludomir Newelski)}
\author{Domenico Zambella}
\date{}
\begin{document}
      
\maketitle

\begin{center}
\textit{A contribution to Liber Amicorum Alberti\\[.5ex] dedicated to Albert Visser on the occasion of his 65th birthday}
\end{center}
\def\medrel#1{\parbox[t]{6ex}{$\displaystyle\hfil #1$}}
\def\ceq#1#2#3{\noindent\parbox[t]{25ex}{$\displaystyle #1$}\medrel{#2}{$\displaystyle #3$}}

\begin{abstract}\noindent
This is an exposition a theorem of mathematical logic which only assumes the notions of structure, elementary equivalence, and compactness (saturation).\\[1ex]
In~\cite{Newelski1} Newelski proved that type-definable Lascar strong types have finite diameter. This exposition is based on the proof in~\cite{Pelaez} up to a minor difference: the notion of \textit{weak c-free\/} of~\cite{NP} is replaced with the notion of \textit{non-drifting\/} that is introduced here.
\end{abstract}

\vspace{2ex}

\section{Introduction}

Few recent results in mathematical logic have a statement that is accessible to logicians outside a specific area. One of them is the theorem on the diameter of Lascar strong types. The theorem concerns a graph that can naturally be defined in any infinite structure. 

The problem can be presented in different ways that are equivalent. We choose the one that requires the fewer prerequisites. For a Galois-theoretical perspective, close to Lascar's original approach~\cite{Lascar}, we refer the reader to e.g.~\cite{Pelaez}. We assume the reader knows what a saturated model is and we fix one. This is denoted by \emph{$\U$\/} and will be our universe for the rest of the paper. We denote its cardinality by \emph{$\kappa$\/} which we assume to be uncountable and larger than the cardinality of the language. We also fix a set $A\subseteq\U$ of small cardinality, where \emph{small\/} means $<\kappa$. There would be no loss of generality in assuming $A=\0$. Indeed, $A$ is fixed throughout the following so it could be absorbed in the language and forgotten about. However, we display it all along. We denote by \emph{$L(A)$\/} the set of formulas with parameters in $A$. By \emph{$|L(A)|$\/} we denote the cardinality of the set of sentences in $L(A)$. This cardinality does not play a role in the proof and assuming $|L(A)|=\omega$ may help on the first reading.

Let $z$ be a tuple of variables of ordinal \emph{length\/} $|z|<\kappa$. Though the theorem is also interesting for infinite tuples, the length of $z$ does not play any role in the proof. Again, for a first reading one can assume $z$ is a single variable. If $a,b\in \U^{|z|}$ we write  $a\equiv_Ab$ if $\phi(a)\iff\phi(b)$ holds (in $\U$) for every $\phi(z)\in L(A)$. In words we say that $a$ and $b$ have the same type over $A$.

A \emph{definable set\/} is a set of the form $\phi(\U)=\big\{a\in\U^{|z|}\, :\, \phi(a)\big\}$ for some formula $\phi(z)\in L(\U)$. A \emph{type\/} is a set of formulas $p(z)\subseteq L(B)$ for some $B\subseteq\U$ of small cardinality. A \emph{type-definable set\/} is a set of the form $p(\U)$, that is, the intersection of $\phi(\U)$ for $\phi(z) \in p(z)$.

It may be useful (though not essential) to interpret this in topological terms. The sets $\phi(\U)=\big\{a\in\U^{|z|}\, :\, \phi(a)\big\}$ for $\phi(z)\in L(A)$ form a base for a topology. This topology is zero-dimensional and it is compact because $\U$ is saturated. It is never T$_0$ as any pair of tuples $a\equiv_A b$ have exactly the same neighborhoods, such a pair exists for cardinality reasons. However it is immediate that the topology induced on the quotient $\U^{|z|}/\!\equiv_A$ is Hausdorff (this is the so-called \textit{Kolmogorov quotient}). In this topology the closed sets are those of the form $p(\U)$ where $p(z)\subseteq L(A)$ is any type.

In what follows, by \emph{model\/} we understand an elementary substructure of $\U$ of small cardinality. The \emph{Lascar graph over $A$\/} has $\U^{|z|}$ as the set of vertices and an edge between all pairs of vertices $a,b\in \U^{|z|}$ such that  $a\equiv_Mb$ for some model $M$ containing $A$. We write \emph{$d_A(a,b)$\/} for the distance between $a$ and $b$ in the Lascar graph over $A$. Let us spell this out: $d_A(a,b)\le n$ if there is a sequence $a_0,\dots,a_n$ such that $a=a_0$, $b=a_n$, and $a_i\equiv_{M_i}a_{i+1}$ for some models $M_i$ containing $A$. We write \emph{$d_A(a,b)<\infty$\/} if $a$ and $b$ are in the same connected component of the Lascar graph over $A$. 

\begin{definition}\label{tipoforteLascarediametro} For every $a\in\U^{|z|}$

\begin{itemize}
\item[1.] $a\equivL_Ab$ \ if \ $d_A(a,b)<\infty$;
\item[2.] $\Ll(a/A)\ =\ \big\{b\ :\ a\equivL_Ab\big\}.$
\end{itemize}
We call $\Ll(a/A)$ the  Lascar strong type of $a$ over $A$. If $a\equivL_Ab$ we say that $a$ and $b$ have the same Lascar strong type over $A$.
\end{definition}
\smallskip

We are ready to state Newelski's theorem which we prove in the next section.
\begin{theorem}\label{thm_main}
For every $a\in\U^{|z|}$ the following are equivalent
\begin{itemize}
 \item[1.] $\Ll(a/A)$ is type-definable;
 \item[2.] $\Ll(a/A)\ =\ \big\{c\ :\ d_A(a,c)<n\big\}$ for some $n<\omega$.
\end{itemize}
\end{theorem}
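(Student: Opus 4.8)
My plan is to factor the problem through a second, more tractable graph on $\U^{|z|}$. Alongside the Lascar graph I would consider the relation $\Theta(x,y)$ that holds when $x$ and $y$ are the first two terms of some infinite $A$-indiscernible sequence. Three features of $\Theta$ drive everything. First, $\Theta$ is reflexive, symmetric, and \emph{type-definable over $A$}: whether $\Theta(x,y)$ holds depends only on $\tp(xy/A)$, and the set of $2$-types that extend to an indiscernible sequence is closed, because extendibility is preserved under finite approximation (Ramsey plus compactness). Second, writing $d_A'$ for the path distance in the $\Theta$-graph, the balls $\Theta^n(a,-)=\{c:d_A'(a,c)\le n\}$ are again type-definable: $\Theta^n$ is obtained from $\Theta$ by finitely many projections of the shape $\E y\,(\Theta(x,y)\wedge\Theta(y,z))$, and the projection of a type-definable set along the quasi-compact space $\U^{|z|}$ is type-definable (tube lemma, i.e.\ a one-step compactness argument). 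Third, the two graphs are \emph{commensurable}: a single Lascar edge $x\equiv_M y$ is a $\Theta$-path of bounded length and, conversely, two terms of an $A$-indiscernible sequence are at Lascar distance at most $2$; hence the two graphs have the same connected components, $\Ll(a/A)=\bigcup_n\Theta^n(a,-)$, and one of $d_A,d_A'$ has finite diameter on $\Ll(a/A)$ exactly when the other does.

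With these reductions the theorem becomes: $\Ll(a/A)$ is type-definable iff $\Ll(a/A)=\Theta^n(a,-)$ for some $n<\omega$. The passage from finite $\Theta$-radius to type-definability is immediate from the second feature above, and it yields $(2)\IMP(1)$: if $\Ll(a/A)$ is a Lascar ball of finite radius then by commensurability it has finite $d_A'$-radius, so it equals some $\Theta^n(a,-)$, which is type-definable. Thus the whole content sits in the reverse implication $(1)\IMP(2)$.

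For $(1)\IMP(2)$ suppose $Y:=\Ll(a/A)$ is type-definable, say by a partial type $q$ over a small set $B\supseteq A\cup\{a\}$ containing the parameters of $q$. I would run a Baire category argument in the Stone space $S_z(B)$. Each ball $C_n:=\Theta^n(a,-)$ is type-definable over $Aa\subseteq B$ and $\equiv_B$-invariant, hence its image in $S_z(B)$ is closed; these images increase with union the closed set $[q]$ (the image of $Y$). As $[q]$ is a nonempty compact Hausdorff space it is a Baire space, so some $C_n$ has nonempty interior: there is a formula $\psi$ over $B$ with $\0\neq Y\cap\psi(\U)\subseteq C_n$. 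Now I invoke homogeneity. Since $Y$ is a single Lascar class, $\Aut(\U/A)$ acts transitively on $Y$; and because $\Theta$ is $A$-invariant, each $\sigma\in\Aut(\U/A)$ carries $C_n$ to the ball $\Theta^n(\sigma a,-)$ of radius $n$ around $\sigma a$. Fixing $v_0\in V:=Y\cap\psi(\U)$, transitivity gives $Y=\bigcup_\sigma\sigma V$, an open cover of the quasi-compact set $Y$ by relatively open pieces; I extract a finite subcover $Y=\sigma_1V\cup\dots\cup\sigma_mV$. Each $\sigma_iV\subseteq\Theta^n(\sigma_ia,-)$, and $\sigma_ia\in Y$ gives $d_A'(a,\sigma_ia)=:r_i<\infty$, so by the triangle inequality $\sigma_iV\subseteq\Theta^{\,n+r_i}(a,-)$. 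With $R:=n+\max_i r_i$ this yields $Y\subseteq\Theta^R(a,-)\subseteq Y$, i.e.\ $Y=\Theta^R(a,-)$ has finite $\Theta$-radius; commensurability then bounds the $d_A$-radius and delivers $(2)$.

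The main obstacle is precisely this reverse implication, and within it the crucial non-formal point is the passage to $\Theta$: one must replace the Lascar edge relation — which is not $\equiv_A$-invariant, so that no finite Lascar ball is type-definable over $A$ — by the $A$-invariant, type-definable relation $\Theta$, whose finite balls \emph{are} type-definable. Once this is in place, Baire category locates a single relatively open chunk inside some ball and homogeneity (transitivity of $\Aut(\U/A)$ on the class, together with the triangle inequality) inflates it to the whole class at the cost of only a finite increase of radius. I expect the commensurability of the two graphs, and the verification that $\Theta$ and its powers are genuinely type-definable, to be the technical lemmas demanding the most care; this is presumably where the \textit{non-drifting} notion enters the author's treatment.
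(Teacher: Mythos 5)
Your setup is fine as far as it goes: the relation $\Theta$ is indeed type-definable over $A$, its finite powers are type-definable (the paper gets the same effect more directly in Proposition~3, showing $d_A(z,y)\le n$ itself is type-definable, so the detour through $\Theta$ is unnecessary but harmless), the two graphs are bi-interpretable up to bounded distortion, and the Baire category argument in $S_z(B)$ correctly produces a formula $\psi$ over $B$ with $\0\neq Y\cap\psi(\U)\subseteq C_n$. The gap is the very last step, the extraction of a finite subcover from $Y=\bigcup_\sigma\sigma[V]$. The sets $\sigma[V]=Y\cap\sigma[\psi(\U)]$ are defined over the \emph{moving} parameter sets $\sigma(B)$, and $Y$ is not quasi-compact for the topology generated by definable sets with arbitrary parameters: singletons $\{y\}=Y\cap(x=y)(\U)$ are relatively open in that topology, and $Y$ is infinite (it contains the $\Aut(\U/M)$-orbit of $a$ for any model $M\supseteq A$), so already $\{\{y\}:y\in Y\}$ is a relatively open cover with no finite subcover. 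Compactness/saturation only yields finite subcovers when all the sets involved live over a single small parameter set, which is exactly what fails here; and you cannot push the cover down to $S_z(B)$ because the $\sigma[V]$ are not $\equiv_B$-invariant. If this step worked, Newelski's theorem would be an exercise.

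This failure of compactness for covers by group translates is precisely what the paper's machinery is built to circumvent, so you have misplaced the difficulty: the commensurability and type-definability lemmas you flag as delicate are routine, while the covering step you treat as formal is the whole content. The paper's substitute is the notion of a \emph{non-drifting} set: one constructs (by a transfinite closure whose consistency rests on Lemma~3, that a finite union of drifting sets is not quasi-invariant) a type $p(z,y)\supseteq l(z)\cup l(y)$ such that every consistent $p(z,a)\wedge\psi(z,a)$ is non-drifting. Applying Baire to this $p$ gives a consistent, hence non-drifting, set $D=p(\U,a)\cap\phi(\U,a)$ contained in a ball of radius $n$ around $a$; non-drifting does \emph{not} give finitely many translates of $D$ covering $Y$, but it does give finitely many translates $f_1[D],\dots,f_k[D]$ such that every translate $g[D]$ \emph{meets} one of them, and meeting (rather than covering) is enough to bound $d_A(a,c)$ by $m+2n$ via the triangle inequality. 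To repair your proof you would need to either import this drifting/quasi-invariant dichotomy or find another argument replacing the finite subcover; as written, implication \ssf{1}$\IMP$\ssf{2} is not established.
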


Newelski's original proof has been simplified over the years. Most proofs have a definite topological dynamics flavor (the liaison with topological dynamics was clarified in~\cite{Newelski2}). Below we give a streamlined version of the proof in~\cite{Pelaez} (see also~\cite[Theorem~9.22]{Casanovas}).

More recent contributions to the subject have investigated the descriptive set theoretic complexity of the relation of having the same Lascar strong types. This is beyond the scope of this short note so we refer the interested reader to~\cite{KPS}, \cite{KMS} and~\cite{KM}.

It is interesting to note that if $\Ll(a/A)$ is type-definable for every $a\in\U^{|z|}$ then the equivalence relation $\equivL_A$ is also type-definable. This might be surprising at first, so we sketch a proof below  (not required for the main theorem).

The equivalence relation $\equivL_A$ is \emph{invariant\/} over $A$, that is, invariant over automorphisms that fix $A$. Its equivalence classes, are \emph{Lascar invariant\/} over $A$, that is, invariant over automorphisms that fix some model containing $A$. There are at most $2^{2^{|L(A)|}}$ sets that are Lascar invariant over $A$. Then  $\equivL_A$ is a \emph{bounded\/} equivalence relation, that is, it has $<\kappa$ equivalence classes. It is not difficult to verify that  $\equivL_A$ is the finest bounded equivalence relation invariant over $A$.
 
Let $e_A(x,z)\subseteq L(A)$ be the union of all types over $A$ that define a bounded equivalence relation. It suffices to prove that if $\Ll(a/A)$ is type-definable then $\Ll(a/A)=e_A(\U,a)$. In fact, if this holds for every $a\in\U^{|z|}$, then $e_A(x,z)$ defines $\equivL_A$.
 
If $\Ll(a/A)$ is type-definable, by Theorem~\ref{thm_main} and Proposition~\ref{prop_Lascar_distance_type_def} it is defined by the type $d_A(a,z)<n$ for some $n$. The same is true for every $b\equiv_A a$. Hence $d_A(x,z)<n$ defines an equivalence relation whose restriction to $\O(a/A)=\{b:b\equiv_A a\}$ has boundedly many classes. By a well-known fact (see for example \cite[Proposition~5.11]{Simon}), there is a bounded equivalence relation type-definable over $A$ that coincides with $d_A(x,z)<n$ on $\O(a/A)$. Then $e_A(\U,a)\subseteq\Ll(a/A)$ follows. The converse inclusion is trivial.
 
In his seminal paper~\cite{Lascar} Lascar asked for (not literally but in an equivalent way) examples where the relation of having the same Lascar strong type is not type-definable. By the theorem above this is equivalent to asking for structures where the diameter of a connected component of the Lascar graph is infinite. In tame structures, like stable and simple ones, the diameter is always finite. The first example with infinite diameter was constructed by Ziegler~\cite{CLPZ} and later more natural examples were found~\cite{CP}. 

\section{Lascar strong automorphisms}

It may not be immediately obvious that the relation $d_A(z,y)\le n$ is type-definable. From this the easy direction of the main theorem follows.

\begin{proposition}\label{prop_Lascar_distance_type_def}
For every $n<\omega$ there is a type $p_n(z,y)\subseteq L(A)$ equivalent to \mbox{$d_A(z,y)\le n$}. 

\end{proposition}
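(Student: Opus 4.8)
The relation $d_A(z,y)\le n$ is clearly invariant under $\Aut(\U/A)$, so by the topological remark in the introduction it will be type-definable over $A$ as soon as I show it is closed in the logic topology. My plan is to reduce this to the case $n=1$ and to treat that case through indiscernible sequences.

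For the reduction, write $E=\{(a,b):d_A(a,b)\le 1\}$ for the edge relation. Since $A$ is small there is a model $M\supseteq A$, whence $E$ is reflexive, and it is plainly symmetric; therefore $d_A(z,y)\le n$ is exactly the $n$-fold composition $E\circ\cdots\circ E$. The key remark is that composition preserves type-definability: if $R,S\subseteq\U^{|z|}\times\U^{|z|}$ are type-definable over $A$, then so is $R\circ S$, because $R\circ S$ is the image of the closed, $A$-invariant set $\{(a,b,c):R(a,b)\wedge S(b,c)\}$ under the projection deleting the middle coordinate, and this projection is a closed map since the middle factor $\U^{|z|}/\!\equiv_A$ is compact. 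Hence it suffices to prove that $E$ itself is type-definable over $A$.

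To handle $E$ I would show that it coincides with the relation $I$, where $I(a,b)$ means that $a$ and $b$ are the first two terms of an infinite $A$-indiscernible sequence. The relation $I$ is type-definable over $A$ by a routine compactness argument: $I(a,b)$ asserts the consistency of the partial type in variables $(x_i)_{i<\omega}$ stating that $(x_i)$ is $A$-indiscernible together with $x_0=a$ and $x_1=b$; by compactness this is equivalent to the conjunction, over all finite $\Delta\subseteq L(A)$ and all $N<\omega$, of the conditions ``there are $c_2,\dots,c_N$ with $(a,b,c_2,\dots,c_N)$ $\Delta$-indiscernible over $A$''. Each such condition cuts out a closed $A$-invariant set, being the projection of the definable set of $\Delta$-indiscernible $N$-tuples along the compact space carrying $c_2,\dots,c_N$, so the intersection $I$ is type-definable over $A$.

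It remains to identify $I$ with $E$. The inclusion $I\subseteq E$ I would obtain by embedding the $A$-indiscernible sequence through $a,b$ into a longer one of the form $I_1+I_2+I_3$, with $a,b$ in the middle block $I_2$ and with long side blocks $I_1,I_3$; a standard lemma on indiscernible sequences then produces a small model $M\supseteq A\cup I_1\cup I_3$ over which $I_2$ remains indiscernible, and since $a,b\in I_2$ any two of its members have the same type over $M$, giving $a\equiv_M b$. The reverse inclusion $E\subseteq I$ is the delicate point, and I expect it to be the main obstacle: starting only from a model $M\supseteq A$ with $a\equiv_M b$, one must manufacture a genuine $A$-indiscernible sequence whose first two terms are exactly $a$ and $b$. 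A single automorphism fixing $M$ and sending $a$ to $b$ does not suffice, since its orbit $a,\sigma a,\sigma^2 a,\dots$ need not be indiscernible; the argument has to exploit the richness of the model $M$, presumably through a coheir or extraction argument over $M$, in order to control all arities simultaneously and not merely the adjacent pairs. Once $E=I$ is established, the reduction of the second paragraph finishes the proof.
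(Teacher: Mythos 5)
Your reduction to the case $n=1$ is sound: the closed-projection argument along the compact middle factor is exactly the paper's remark that types in a saturated model are closed under existential quantification. The relation $I$ is indeed type-definable over $A$, and the inclusion $I\subseteq E$ is a true (and standard) lemma. The problem is the step you yourself flag as the main obstacle: $E\subseteq I$ is not merely delicate, it is false, so the identity $E=I$ on which your whole plan rests cannot be established. Take $T=\mathrm{Th}(\mathbb{Z},s)$ with $A=\0$, let $M$ be a model that is a union of $\mathbb{Z}$-chains and let $a$ lie in a chain disjoint from $M$. Then $a\equiv_M s(a)$ (both elements realize the unique type over $M$ of an element outside all $M$-chains), so $d_\0(a,s(a))\le 1$; but no $\0$-indiscernible sequence of length $\ge 3$ begins with $a,s(a)$, since applying indiscernibility to the formula $y=s(x)$ forces the third term $c$ to satisfy both $c=s(a)$ and $c=s(s(a))$, whence $s(a)=a$, a contradiction. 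What is true in general is only the two-sided comparison $I\subseteq E$ and $E\subseteq I\circ I$: given $a\equiv_M b$, a Morley sequence of a global coheir of $\tp(a/M)$ over $M\cup\{a,b\}$ produces $c$ with $I(a,c)$ and $I(b,c)$. This shows the two graphs have distances within a factor of $2$ of each other, which would let you prove a variant of Theorem~2 with $I$ in place of $E$, but it does not make the relation $E$ of the proposition type-definable.

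The idea missing from your plan is that one need not characterize the edge relation by an auxiliary relation at all: the paper quantifies existentially over an enumerated model. Writing $w=\langle w_i:i<|L(A)|\rangle$, it puts $p_1(z,y)=\E w\,p(w,z,y)$ where $p(w,z,y)$ consists of $q(w)$ together with all equivalences $\phi(z,w)\iff\phi(y,w)$ for $\phi\in L(A)$, and $q(w)$ is a single consistent type over $A$ all of whose realizations enumerate a model containing $A$. The existence of $q$ is the only real content, and it is obtained by building the Tarski--Vaught test into the type: enumerating $L_{x,w}(A)$ as $\langle\psi_i(x,w_{\restriction i}):i<|L(A)|\rangle$, one takes $q(w)=\{\E x\,\psi_i(x,w_{\restriction i})\imp\psi_i(w_i,w_{\restriction i}):i<|L(A)|\}$, which is realized inside any model containing $A$ and whose every realization enumerates one. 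You should replace your treatment of $E$ by this direct construction; the composition step of your first paragraph can then be kept as is (or replaced by the one-line appeal to closure of types under $\E$).
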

\begin{proof}
In a saturated structure types are closed under existential quantification, therefore it suffices to prove the proposition with $n=1$. Let $\lambda=|L(A)|$ and let $w=\langle w_i:i<\lambda\rangle$ be a tuple of distinct variables. Then $p_1(z,y)\;=\;\E w\;p(w,z,y)$ where

\ceq{\hfill p(w,z,y)}{=}{q(w)\ \cup\ \Big\{\phi(z,w)\iff\phi(y,w)\ :\ \phi(z,w)\in L(A)\Big\}}

\noindent and $q(w)\subseteq L(A)$ is a consistent type with the property that all its realizations enumerate a model containing $A$.

Now we only need to prove that such a type exists. Let $\langle\psi_i(x,w_{\restriction i}):i<\lambda\rangle$ be an enumeration of the formulas in $L_{x,w}(A)$, where $x$ is a single variable. Let 

\ceq{\hfill q(w)}{=}{\big\{\E x\;\psi_i(x,w_{\restriction i})\;\imp\;\psi_i(w_i,w_{\restriction i})\ :\ i<\lambda\big\}.}

\noindent Any realization of $q(w)$ satisfy the Tarski-Vaught test therefore it enumerates a model containing $A$. Vice versa it is clear that we can realize $q(w)$ in any model containing $A$.
\end{proof}

We write \emph{Aut\kern.16ex$(\U/A)$} for the set of automorphisms of $\U$ that fix $A$.
We write \emph{Aut\kern.16ex{f}$(\U/A)$\/} for the subgroup of $\Aut(\U/A)$ generated by the automorphisms that fix some model $M$ containing $A$. The ``f'' in the symbol stands for \textit{fort}, the French for \textit{strong}. It is immediate to verify that $\Autf(\U/A)$ is a normal subgroup of $\Aut(\U/A)$.

Recall that saturated models are homogeneous, hence any $a\equiv_Bb$ are conjugated over $B$, that is, there is an $f\in\Aut(\U/B)$ such that $fa=b$. Then it is easy to verify that $a\equivL_Ab$ \ if and only if $fa=b$ for some $f\in\Autf(\U/A)$. The following notions apply generally to any group $G$ acting on some set $X$ and and to any set $\D\subseteq X$. Below we always have $G=\Autf(\U/A)$ and $X=\U^{|z|}$.  We say that \emph{$\D$ is drifting\/} if for every finitely many $f_1,\dots,f_n\in G$ there is a $g\in G$ such that $g[\D]$ is disjoint from all the $f_i[\D]$. We say that \emph{$\D$ is quasi-invariant\/} if for every finitely many $f_1,\dots,f_n\in G$ the sets  $f_i[\D]$ have non-empty intersection. Note parenthetically that $\D$ is quasi-invariant if and only if $\neg\D=X\smallsetminus\D$ is not c-free in the sense of \cite{Pelaez} or not generic in the sense of~\cite{KMS}. We say that a formula or a type is drifting or quasi-invariant if the set it defines is.

The union of drifting sets need not be drifting. However, the following lemma says it cannot be quasi-invariant.

\begin{lemma}\label{lem_newelski}
The union of finitely many drifting sets in not quasi-invariant.
\end{lemma}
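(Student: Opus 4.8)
The plan is to argue by contradiction and induct on the number $k$ of drifting sets. Assume $\D=\D_1\cup\dots\cup\D_k$ with each $\D_j$ drifting, and suppose toward a contradiction that $\D$ is quasi-invariant; the goal is then to produce finitely many $f_1,\dots,f_N\in G$ with $\bigcap_m f_m[\D]=\varnothing$. The base case $k=1$ is immediate: since $\D_1$ is drifting there is $g\in G$ with $g[\D_1]\cap\D_1=\varnothing$ (apply the definition with the single element $\mathrm{id}$), so $\{\mathrm{id},g\}$ already witnesses that $\D_1$ is not quasi-invariant.

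It is worth first seeing why the obvious attack fails, since this dictates the shape of the real argument. One would like to find $f_1,\dots,f_{k+1}$ such that, for each colour $j$, the translates $f_1[\D_j],\dots,f_{k+1}[\D_j]$ are pairwise disjoint: then any point $x$ lying in every $f_m[\D]$ would determine a colour $c(m)$ with $x\in f_m[\D_{c(m)}]$, pairwise disjointness would force $c$ to be injective, and this is impossible with $k+1$ indices and only $k$ colours. But choosing such $f_m$ means separating all colours simultaneously, i.e.\ translating the whole union off its translates — which is exactly what the remark preceding the lemma forbids, since a union of drifting sets need not be drifting. So the colours must be handled one at a time.

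Accordingly I would peel off a single colour. Set $\D''=\D_2\cup\dots\cup\D_k$; the inductive hypothesis applies to these $k-1$ drifting sets and yields $f_1,\dots,f_r\in G$ with $\bigcap_{i\le r} f_i[\D'']=\varnothing$. The idea is to spread out finitely many translated copies of this configuration: pick $g_1,\dots,g_m\in G$ and consider the finite family $\{g_l f_i : l\le m,\ i\le r\}$. Suppose $x\in g_l f_i[\D]$ for all $l,i$. Fixing a block $l$, we have $\bigcap_{i\le r} g_l f_i[\D'']=g_l[\bigcap_{i\le r} f_i[\D'']]=\varnothing$, so $x\notin g_l f_{i_l}[\D'']$ for some $i_l$; since $x\in g_l f_{i_l}[\D]=g_l f_{i_l}[\D_1]\cup g_l f_{i_l}[\D'']$, this forces $x\in g_l f_{i_l}[\D_1]$. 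Hence $x$ meets the colour-$1$ block $g_l[\,\bigcup_{i\le r} f_i[\D_1]\,]$ for every $l\le m$. To finish it then suffices to arrange that these colour-$1$ blocks are pairwise disjoint, for then no single $x$ can meet them all once $m\ge 2$, and $\{g_l f_i\}$ witnesses that $\D$ is not quasi-invariant.

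The separation of the colour-$1$ blocks is the step I expect to be the \textbf{main obstacle}. Writing $\D_1^\ast=\bigcup_{i\le r} f_i[\D_1]$, the blocks are the translates $g_l[\D_1^\ast]$, and making them pairwise disjoint amounts to pushing the finite union $\D_1^\ast$ off any finitely many of its own translates. I would try to secure this by constructing the $g_l$ one at a time, at stage $l$ invoking the drifting of $\D_1$ against the finite family of $\D_1$-translates already produced. The delicate point — and the crux of the whole lemma — is that a single element $g_l$ must simultaneously push all $r$ sub-copies $g_l f_1[\D_1],\dots,g_l f_r[\D_1]$ clear of the earlier blocks, whereas the definition of drifting separates one translate of $\D_1$ at a time; verifying that the drifting of $\D_1$ nonetheless delivers such a $g_l$ (i.e.\ that a finite union of translates of a drifting set can itself be driven off its translates) is where the argument must do genuine work, and is the part I would expect to require the most care.
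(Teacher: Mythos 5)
Your proposal has a genuine gap, and it is precisely the step you flag as the ``main obstacle.'' Everything up to the separation of the colour-$1$ blocks is sound, but that separation is not a delicate verification left for later: it is the entire difficulty of the lemma, and the tools available do not deliver it. Concretely, you need a single $g$ with $g[\D_1^\ast]\cap\D_1^\ast=\varnothing$, where $\D_1^\ast=\bigcup_{i\le r}f_i[\D_1]$, i.e.\ one $g$ such that all $r$ translates $gf_1[\D_1],\dots,gf_r[\D_1]$ simultaneously avoid all of $f_1[\D_1],\dots,f_r[\D_1]$. Drifting produces, for any finite family of translates of $\D_1$, one left translate $g[\D_1]$ avoiding them; it gives no control over $gf_i[\D_1]$, because the correction $f_i$ sits on the wrong side of $g$. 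In terms of the return set $R=\{h\in G: h[\D_1]\cap\D_1\ne\varnothing\}$, drifting says exactly that no finite union of left translates of $R$ covers $G$ (equivalently, the set of ``good'' $g$ for each single index $i$ is the complement of such a union, and is non-empty, indeed left thick); but your claim requires a point common to such a complement and its right translates, equivalently that the two-sided translates $f_iRf_j^{-1}$ do not cover $G$. Left thickness gives no purchase on right translates when $G$ is non-commutative (there are left thick subsets of the free group disjoint from a right translate of themselves), so your plan of choosing the $g_l$ one at a time by invoking the drifting of $\D_1$ does not go through. Note also that the lemma itself, applied to the single drifting set $\D_1$, would only yield finitely many translates of $\D_1^\ast$ with empty \textit{total} intersection, not two \textit{disjoint} ones, so the missing claim is not even a weakening of what you are trying to prove.

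The paper sidesteps this trap by strengthening the induction hypothesis rather than the conclusion: it proves that any set $\Ll$ contained in a finite union of $G$-translates of $\D_1\cup\dots\cup\D_n$ fails to be quasi-invariant. In that form the inductive step only has to push one translate $g[\D_n]$ off finitely many translates of $\D_n$ --- exactly what drifting provides --- at the price of concluding, not that some finite intersection is empty, but that $\Ll$ intersected with finitely many of its own translates is contained in a finite union of translates of $\D_1\cup\dots\cup\D_{n-1}$, to which the strengthened hypothesis applies. If you want to rescue your scheme, replace ``the colour-$1$ blocks are pairwise disjoint'' by this weaker, achievable conclusion and strengthen the statement you carry through the induction accordingly.
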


\begin{proof}
\def\medrel#1{\parbox[t]{6ex}{$\displaystyle\hfil #1$}}
\def\ceq#1#2#3{\noindent\parbox[t]{30ex}{$\displaystyle #1$}\medrel{#2}{$\displaystyle #3$}}

It is convenient to prove an apparently more general claim. If $\D_1,\dots,\D_n$ are all drifting and $\Ll$ is such that for some finite $F\subseteq G$\medskip 

\ceq{\sharp\hfill\Ll}{\subseteq}{\bigcup_{f\in F}f[\D_1\cup\dots\cup\D_n],} 

\noindent then $\Ll$ is not quasi-invariant. (The statement is slightly awkward since a superset of a quasi-invariant set must be quasi-invariant.)

The claim is vacuously true for $n=0$. Let $n$ be positive, let $\C=\D_1\cup\dots\cup\D_{n-1}$, and assume the claim holds for $n-1$. Since $\D_n$ is drifting there is a $g\in G$ such that $g[\D_n]$ is disjoint from $f[\D_n]$ for every $f\in F$, which implies that\medskip 

\ceq{\hfill\Ll\cap g[\D_n]}{\subseteq}{\bigcup_{f\in F}f[\C].} 

\noindent Hence for every $h\in G$ there holds\medskip 

\ceq{\hfill h g^{-1}[\Ll]\cap h[\D_n]}{\subseteq}{\bigcup_{f\in F}hg^{-1}f[\C].} 

\noindent Rewriting $\sharp$ as

\ceq{\hfill\Ll}{\subseteq}{\bigcup_{f\in F}f[\C]\ \cup\ \bigcup_{h\in F}f[\D_n],}

\noindent we observe that\medskip

\ceq{\hfill \Ll \cap\bigcap_{h\in F}h g^{-1}[\Ll]}{\subseteq}{\bigcup_{f\in F}f[\C]\ \cup\ \bigcup_{f\in F}hg^{-1}f[\C].}\smallskip

\noindent By the induction hypothesis, the r.h.s.\@ cannot be quasi invariant. Hence neither is $\Ll$, proving the claim and with it the lemma.
\end{proof}

The following is a consequence of Baire's category theorem. We sketch a proof for the convenience of the reader.

\begin{lemma}\label{Baire}
Let $p(x)\subseteq L(B)$ and $p_n(x)\subseteq L(A)$, for $n<\omega$, be consistent types such that\smallskip

\ceq{\ssf{1.}\hfill p(x)}{\imp}{\bigvee_{n<\omega}p_n(x)}

\noindent Then there is an $n<\omega$ and a formula $\phi(x)\in L(A)$ consistent with $p(x)$ such that \smallskip

\ceq{\ssf{2.}\hfill p(x)\wedge\phi(x)}{\imp}{p_n(x)}
\end{lemma}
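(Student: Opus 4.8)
The plan is to move the whole problem into a compact Hausdorff space and invoke Baire's theorem there. The natural arena is the Stone space $S_x(A)$ of complete types over $A$ in the variable $x$, which is the (Hausdorff, compact) Kolmogorov quotient $\U^{|x|}/\!\equiv_A$ discussed in the introduction; it has a basis of clopen sets $[\phi]=\{q:\phi\in q\}$ indexed by the formulas $\phi(x)\in L(A)$. Every type over $A$ determines a closed set there, so each $p_n(x)$ gives the closed set $[p_n]=\{q: p_n\subseteq q\}$.

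The first step is to replace $p(x)$, whose parameters lie in $B$ and which therefore need not be closed in $S_x(A)$, by its trace $K=\{\tp(a/A): a\models p\}$. I would check that $K$ is closed. Indeed $K$ is the image of the closed set $\{r\in S_x(B): p\subseteq r\}$ under the restriction map $S_x(B)\to S_x(A)$; since that map is continuous and $S_x(B)$ is compact while $S_x(A)$ is Hausdorff, the image is compact, hence closed. Since $p$ is consistent, $K\neq\0$. This closedness is the one genuinely topological point of the argument and the place where compactness (saturation) enters; I expect it to be the main thing to get right, as it is exactly what would fail if one tried to work naively with $p(\U)$ in the topology over $A$.

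Next I would reinterpret the hypothesis. Because each $p_n$ is over $A$, a realization $a\models p$ satisfies $p_n$ iff $p_n\subseteq\tp(a/A)$, so the premise $p(x)\imp\bigvee_{n<\omega}p_n(x)$ says precisely that $K\subseteq\bigcup_{n<\omega}[p_n]$. Thus $K=\bigcup_{n<\omega}\big([p_n]\cap K\big)$ exhibits the compact, hence Baire, space $K$ as a countable union of relatively closed sets. By Baire's category theorem some $[p_n]\cap K$ has non-empty interior in $K$, and since the $[\phi]$ form a basis this yields a formula $\phi(x)\in L(A)$ with $\0\neq[\phi]\cap K\subseteq[p_n]$.

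Finally I would translate this back to realizations. That $[\phi]\cap K$ is non-empty means some $a\models p$ satisfies $\phi$, i.e.\ $\phi$ is consistent with $p$. And $[\phi]\cap K\subseteq[p_n]$ means that every $a\models p$ with $a\models\phi$ has $p_n\subseteq\tp(a/A)$, that is $a\models p_n$; this is exactly the desired conclusion $p(x)\wedge\phi(x)\imp p_n(x)$. The only routine verifications left are the continuity of the restriction map and the fact that the sets $[\phi]$ form a basis of clopens, both immediate from the definition of the Stone topology.
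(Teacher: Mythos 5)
Your proof is correct, but it takes a genuinely different route from the one written in the paper. The paper remarks that the lemma ``is a consequence of Baire's category theorem'' and then gives a self-contained compactness argument instead: negating the conclusion, one inductively picks $\psi_n(x)\in p_n(x)$ so that $p(x)\wedge\neg\psi_0(x)\wedge\dots\wedge\neg\psi_n(x)$ stays consistent, and compactness then produces a realization of $p$ omitting every $p_n$, contradicting hypothesis~1. You instead make the allusion to Baire literal: you pass to the Stone space $S_x(A)$, observe that the trace $K$ of $p$ is closed there (correctly locating this as the point where saturation/compactness is used, via the continuous image of the closed set $[p]\subseteq S_x(B)$), write $K$ as a countable union of the relatively closed sets $[p_n]\cap K$, and apply Baire's theorem to the compact Hausdorff space $K$ to find a basic clopen $[\phi]$ witnessing the conclusion. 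Both arguments are sound; the paper's is shorter and needs no topological vocabulary beyond compactness of types, while yours is conceptually transparent, explains the lemma's name, and generalizes verbatim to any situation where the relevant type space is a Baire space. One could add that your appeal to Baire for compact Hausdorff spaces, if unwound, reproduces essentially the same inductive shrinking of nonempty (cl)open sets that the paper performs directly with formulas, so the two proofs are cousins rather than strangers.
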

\begin{proof}
Negate \ssf{2} and choose inductively for every $n<\omega$ a formula $\psi_n(x)\in p_n(x)$ such that $p(x)\wedge\neg\psi_0(x)\wedge\dots\wedge\neg\psi_n(x)$ is consistent. By compactness, we contradict \ssf{1}.
\end{proof}

Finally we can prove the Theorem~\ref{thm_main} which we restate for convenience.
\setcounter{thm}{1}
\begin{theorem}
For every $a\in\U^{|z|}$ the following are equivalent
\begin{itemize}
 \item[1.] $\Ll(a/A)$ is type-definable;
 \item[2.] $\Ll(a/A)\ =\ \big\{c\ :\ d_A(a,c)<n\big\}$ for some $n<\omega$.
\end{itemize}
\end{theorem}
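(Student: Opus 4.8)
The implication $2\imp1$ is the easy direction: by Proposition~\ref{prop_Lascar_distance_type_def} the relation $d_A(z,y)\le n$ is type-definable over $A$, so substituting $a$ for $z$ shows that $\{c:d_A(a,c)<n\}=\{c:d_A(a,c)\le n-1\}$ is type-definable (over $Aa$). Hence if $\Ll(a/A)$ equals this set it is type-definable. For $1\imp2$ I would argue by contradiction: assume $\Ll(a/A)$ is type-definable but has infinite diameter, i.e. $\Ll(a/A)\ne\{c:d_A(a,c)\le n\}$ for every $n$. Throughout write $G=\Autf(\U/A)$, so that $\Ll(a/A)$ is the orbit $Ga$, and write $B_n=\{c:d_A(a,c)\le n\}$ for the balls centred at $a$. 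Being an orbit, $\Ll(a/A)$ is $G$-invariant, hence quasi-invariant. The plan is to exhibit $\Ll(a/A)$ as a union of \textit{finitely many} drifting sets, contradicting Lemma~\ref{lem_newelski}.

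The geometric heart of the argument is the claim that, under the assumption of infinite diameter, every ball $B_n$ is drifting (and so is every $G$-translate $g[B_n]$). First I would record that the Lascar distance satisfies $d_A(fc,fc')=d_A(c,c')$ for $f\in\Aut(\U/A)$, so $f[B_n]=\{c:d_A(fa,c)\le n\}$ is simply the ball of radius $n$ around the orbit point $fa$; two such balls are disjoint exactly when their centres are at distance $>2n$. Now if finitely many balls $f_1[B_n],\dots,f_k[B_n]$ could not be avoided, i.e. if every orbit point lay within $2n$ of some $f_ia$, then by the triangle inequality the whole orbit would have diameter at most $4n+\max_{i,j}d_A(f_ia,f_ja)<\infty$, contrary to assumption. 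Hence for every finite family there is $g\in G$ with $g[B_n]$ disjoint from all $f_i[B_n]$, which is exactly drifting; the same computation, conjugated, shows each $g[B_n]$ is drifting, and any subset of a drifting set is drifting.

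It remains to cut $\Ll(a/A)$ down to a finite union, and here I would use type-definability together with the remaining two lemmas. Since every point of the orbit lies at finite distance from $a$, the type defining $\Ll(a/A)$ implies $\bigvee_n p_n(a,y)$, where $p_n(a,y)$ defines $B_n$. Applying Lemma~\ref{Baire} with $Aa$ in place of $A$ (its proof being insensitive to the choice of parameter set, as it uses only compactness and closure of $L(Aa)$ under finite boolean combinations) produces a single $n$ and a formula $\phi(a,y)\in L(Aa)$ consistent with the type defining $\Ll(a/A)$ such that $S:=\{y\in\Ll(a/A):\phi(a,y)\}$ is nonempty and contained in $B_n$. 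Because $\Ll(a/A)$ is a single $G$-orbit, its $G$-translates $g[S]$ cover it; and because $\Ll(a/A)$ is type-definable, a compactness argument applied to the relatively closed complements $\Ll(a/A)\smallsetminus g[S]$ reduces this to a finite subcover $\Ll(a/A)=g_1[S]\cup\dots\cup g_k[S]$. Each $g_i[S]\subseteq g_i[B_n]$ is drifting, so $\Ll(a/A)$ is a finite union of drifting sets, contradicting its quasi-invariance via Lemma~\ref{lem_newelski}.

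I expect the main obstacle to be the drifting claim: the real content is recognizing that infinite diameter is precisely what prevents the orbit from being covered by finitely many balls of a fixed radius, which is exactly what makes the balls drifting. A secondary delicate point is the reduction to a finite subcover, since the translates $g[S]$ carry parameters $ga$ ranging over the whole orbit; one must ensure the relevant compactness is applied to a small enough family, and it is here that type-definability of $\Ll(a/A)$ enters essentially.
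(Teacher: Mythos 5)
Your easy direction is fine, and your observation that infinite diameter forces every ball $B_n=\{c:d_A(a,c)\le n\}$ to be drifting is correct and is a clean way to package what the paper does implicitly; the application of Lemma~\ref{Baire} to produce a nonempty, relatively clopen $S\subseteq\Ll(a/A)$ with $S\subseteq B_n$ is also fine. The gap is exactly the step you flag as ``delicate'': extracting a finite subcover from $\Ll(a/A)=\bigcup_{g\in G}g[S]$, where $G=\Autf(\U/A)$. The translates $g[S]=l(\U)\cap\phi(\U,ga)$ carry parameters $ga$ ranging over the whole orbit, and the orbit is a subset of $\U^{|z|}$ that may have cardinality $\kappa$; hence the family of relatively closed complements is a type over a parameter set that is \emph{not} small, and saturation gives no compactness for it. (Compactness of the Stone space over a large parameter set does not help either: tuples of $\U$ only realize types over small sets, so empty intersection in $\U^{|z|}$ does not translate into empty intersection of the corresponding closed sets of the Stone space.) Type-definability of $\Ll(a/A)$ does not repair this --- it bounds the number of formulas in $l(z)$, not the number of translates needed. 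And without the finite subcover there is no contradiction at all: every orbit is covered by the translates of any of its nonempty subsets, drifting or not.

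The paper avoids needing a finite cover. It first builds, by transfinite recursion, a type $p(z,y)\supseteq l(z)\cup l(y)$ such that every consistent extension $p(z,a)\wedge\psi(z,a)$ by a single $L(A)$-formula is non-drifting; the consistency of that recursion is precisely where Lemma~\ref{lem_newelski} is spent (each stage stays quasi-invariant). Only then is Lemma~\ref{Baire} applied, to this $p$, yielding a set $p(\U,a)\cap\phi(\U,a)$ that is simultaneously contained in a ball of radius $n$ and non-drifting. Being non-drifting is the correct finitary substitute for your subcover: it provides finitely many $a_i$ in the orbit such that every translate $p(\U,c)\cap\phi(\U,c)$ merely \emph{meets} some $p(\U,a_i)\cap\phi(\U,a_i)$, and that already gives $d_A(a,c)\le\max_{i}d_A(a,a_i)+2n$. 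So to salvage your architecture you must either prove the finite subcover (which I do not believe follows from your hypotheses) or weaken ``cover'' to ``meet one of finitely many fixed translates'' and arrange for $S$ to be non-drifting --- which is exactly what the recursive construction of $p$ accomplishes, and is the step your proposal is missing.
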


\begin{proof}
Implications \ssf{2}$\IMP$\ssf{1} holds by Proposition~\ref{prop_Lascar_distance_type_def}. We prove  \ssf{1}$\IMP$\ssf{2}. Suppose $\Ll(a/A)$ is type-definable, say by the type $l(z)$. Let $p(z,y)$ be some consistent type (to be defined below) such that and $p(z,y)\imp l(z)\wedge l(y)$. Then, in particular\smallskip

\ceq{\hfill p(z,y)}{\imp}{\bigvee_{n<\omega}d_A(z,y)<n.}

By Proposition~\ref{prop_Lascar_distance_type_def} and Lemma~\ref{Baire}, there is some $n<\omega$ and some $\phi(z,y)\in L(A)$  consistent with $p(z,y)$ such that\smallskip

\ceq{\sharp_1\hfill p(z,y)\wedge\phi(z,y)}{\imp}{d_A(z,y)<n}.\smallskip

\noindent Below we define $p(z,y)$ so that for every $\psi(z,y)\in L(A)$\smallskip

\ceq{\sharp_2\hfill p(z,a)\wedge\psi(z,a)}{}{} is non-drifting whenever it is consistent. \smallskip

\noindent Drifting and quasi-invariance are relative to the action of $\Autf(\U/A)$ on $\U^{|z|}$. Then, in particular, $p(z,a)\wedge\phi(z,a)$ is non-drifting and the theorem follows. In fact, let $a_0,\dots, a_k\in\Ll(a/A)$ be such that every set $p(\U,c)\cap\phi(\U,c)$ for $c\in\Ll(a/A)$ intersects some $p(\U, a_i)\cap\phi(\U,a_i)$. Let $m$ be such that $d_A(a_i,a_j)\le m$ for every $i,j\le k$. From $\sharp_1$ we obtain that $d_A(a,c)\le m+2n$. As $c\in\Ll(a/A)$ is arbitrary, the theorem follows.

The required type $p(z,y)$ is union of a chain of types $p_\alpha(z,y)$ defined as follows\smallskip

\ceq{\hfill p_{0}(z,y)}{=}{l(z)\ \cup\ l(y);}\smallskip

\ceq{\sharp_3\hfill p_{\alpha+1}(z,y)}{=}{p_\alpha(z,y)\ \cup\ \Big\{\neg\psi(z,y)\in L(A)\ :\ p_\alpha(z,a)\wedge\psi(z,a) \textrm{ is drifting}\Big\};}

\ceq{\hfill p_{\alpha}(z,y)}{=}{\bigcup_{n<\alpha}p_{n}(z,y)}\quad for limit $\alpha$.

\noindent Clearly, the chain stabilizes at some stage $\le|L(A)|$ yielding a type which satisfies $\sharp_2$. So we only need to prove consistency. We prove that $p_\alpha(z,a)$ is quasi-invariant (so, in particular, consistent). Suppose that $p_n(z,a)$ is quasi-invariant for every $n<\alpha$ but, for a contradiction, $p_\alpha(z,a)$ is not. Then for some $f_1,\dots,f_k\in\Autf(\U/A)$

\ceq{\hfill p_\alpha(z,a)}{\cup}{\bigcup^k_{i=1}p_\alpha(z,f_ia)}

\noindent is inconsistent. By compactness there is some $n<\alpha$ and some $\psi_i(z,y)$ as in $\sharp_3$ such that

\ceq{\hfill p_n(z,a)}{\imp}{\neg\bigwedge^m_{j=1}\bigwedge^k_{i=1}\neg\psi_j(z,f_ia)}

As $p_n(z,a)$ is quasi-invariant, from Lemma~\ref{lem_newelski} we obtain that $p_n(z,f_ia)\wedge\psi_j(z,f_ia)$ is non-drifting for some $i,j$. Clearly we can replace $f_ia$ with $a$, then this contradicts the construction of $p_\alpha(z,y)$ and proves the theorem.
\end{proof}

We are indebted to the anonymous referee for many useful comments and for a neat proof of Lemma~\ref{lem_newelski}.

\begin{bibdiv}
\begin{biblist}[]\normalsize

\bib{Casanovas}{book}{
   author={Casanovas, Enrique},
   title={Simple theories and hyperimaginaries},
   series={Lecture Notes in Logic},
   volume={39},
   publisher={Association for Symbolic Logic; Cambridge University Press},
   date={2011},
}

\bib{CLPZ}{article}{
   author={Casanovas, E.},
   author={Lascar, D.},
   author={Pillay, A.},
   author={Ziegler, M.},
   title={Galois groups of first order theories},
   journal={J. Math. Log.},
   volume={1},
   date={2001},
   number={2},
   pages={305--319},
   }

 \bib{CP}{article}{
   author={Conversano, Annalisa},
   author={Pillay, Anand},
   title={Connected components of definable groups and $o\mbox{-}$minimality I},
   journal={Adv. Math.},
   volume={231},
   date={2012},
   number={2},
   pages={605--623},
}
 
\bib{KM}{article}{
   author={Kaplan, Itay},
   author={Miller, Benjamin D.},
   title={An embedding theorem of\/ $\Bbb E_0$ with model theoretic
   applications},
   journal={J. Math. Log.},
   volume={14},
   date={2014},
   number={2},
   pages={14--22},
}

\bib{KMS}{article}{
   author={Kaplan, Itay},
   author={Miller, Benjamin D.},
   author={Simon, Pierre},
   title={The Borel cardinality of Lascar strong types},
   journal={J. Lond. Math. Soc. (2)},
   volume={90},
   date={2014},
   number={2},
   pages={609--630},
}

 \bib{KPS}{article}{
   author={Krupi{\'n}ski, Krzysztof},
   author={Pillay, Anand},
   author={Solecki, S{\l}awomir},
   title={Borel equivalence relations and Lascar strong types},
   journal={J. Math. Log.},
   volume={13},
   date={2013},
   number={2},
}  

\bib{Newelski1}{article}{
   author={Newelski, Ludomir},
   title={The diameter of a Lascar strong type},
   journal={Fund. Math.},
   volume={176},
   date={2003},
   number={2},
   pages={157--170},
}

\bib{Newelski2}{article}{
   author={Newelski, Ludomir},
   title={Topological dynamics of definable group actions},
   journal={J. Symbolic Logic},
   volume={74},
   date={2009},
   number={1},
   pages={50--72},
}

\bib{NP}{article}{
   author={Newelski, Ludomir},
   author={Petrykowski, Marcin},
   title={Weak generic types and coverings of groups. I},
   journal={Fund. Math.},
   volume={191},
   date={2006},
   number={3},
   pages={201--225},
}

 \bib{Lascar}{article}{
   author={Lascar, Daniel},
   title={On the category of models of a complete theory},
   journal={J. Symbolic Logic},
   volume={47},
   date={1982},
   number={2},
   pages={249--266},
}  
 
\bib{Simon}{book}{
   author={Simon, Pierre},
   title={A Guide to NIP theories},
   series={Lecture Notes in Logic},
   volume={39},
   publisher={Association for Symbolic Logic; Cambridge University Press},
   date={2014},
}    
   
\bib{Pelaez}{book}{
   author={Pel\'aez, Rodrigo},
   title={About the Lascar group},
   series={PhD Thesis}
   publisher={University of Barcelona, Universitat de Barcelona, Departament de
L\'ogica, Hist\'oria i Filosofia de la Ci\'encia},
   date={2008},
}
 
\end{biblist}
\end{bibdiv}
\vfil
\hfill\begin{minipage}{28ex}
Domenico Zambella\\
Dipartimento di Matematica\\
Universit\`a di Torino\\
via Calrlo Alberto 10\\
10123 Torino\\
\texttt{domenico.zambella@unito.it}
\end{minipage}

\end{document}